
\documentclass[11pt]{article}

\usepackage{amsmath,amssymb}
\usepackage{amsthm}
\usepackage{setspace}

\oddsidemargin 0pt
\evensidemargin 0pt

\headsep 20pt            

\tolerance=1000
\textheight 9.0in
\textwidth 6.2in

\begin{document}

\newtheorem{thm}{Theorem}
\newtheorem{claim}[thm]{Claim}
\newtheorem{cons}[thm]{Construction}
\newtheorem{conj}[thm]{Conjecture}
\newtheorem{cor}[thm]{Corollary}
\newtheorem{exa}[thm]{Example}
\newtheorem{lem}[thm]{Lemma}
\newtheorem{obs}[thm]{Observation}
\newtheorem{prop}[thm]{Proposition}

\def\cA{{\cal A}}
\def\cB{{\cal B}}
\def\cC{{\cal C}}
\def\cE{{\cal E}}
\def\cF{{\cal F}}
\def\cG{{\cal G}}
\def\cH{{\cal H}}
\def\ck{{\cal K}}
\def\cI{{\cal I}}
\def\cJ{{\cal J}}
\def\cL{{\cal L}}
\def\cM{{\cal M}}
\def\cN{{\cal N}}
\def\cP{{\cal P}}
\def\cQ{{\cal Q}}
\def\cS{{\cal S}}

\voffset=-0.5in

\pagestyle{myheadings}
\markright{{\small \sc F\"uredi, Kostochka, Kumbhat:}
  {\it\small Abundant packings and choosability with separation }}

\title{\huge Minimal abundant packings\\
  and  choosability with separation  
} 

\author{{\bf Zolt\'an F\"uredi}\footnote{
Research supported in part by the National Research Development and Innovation Office, NKFIH,
KKP 133819 and OTKA 132696.
The support of the HUN-REN Research Network is appreciated.}
\\
{\small\em
R\'enyi Mathematical Institute, Budapest, Hungary}\\
{\small\texttt furedi.zoltan@renyi.hu, z-furedi@illinois.edu}
\\ \\
{\bf Alexandr Kostochka}\footnote{
Research of this author
was supported in part by  NSF  Grant DMS-2153507 and by NSF RTG Grant DMS-1937241.}\\
{\small\em
University of Illinois at Urbana--Champaign, Urbana, IL 61801}\\
{\small\texttt kostochk@illinois.edu}\\
\\
{\bf Mohit Kumbhat}\\
{\small\em University of Nevada at Reno,  Reno, NV 89557}\\
{\small\texttt  mkumbhat@unr.edu }\\
}

\date{${}$}

\renewcommand{\thefootnote}{\empty}
\footnotetext{ 
\noindent \emph{Key words and phrases}: Packing of sets, $t$-designs, choosability, complete graph, graph colorings\\
 \emph{2020 Mathematics Subject Classification}:
 05C15, 05B40.}

\maketitle

\begin{abstract}
A $(v,k,t)$ packing of size $b$ is a system of $b$ subsets (blocks) of
 a $v$-element underlying set such that each block has $k$ elements
 and every $t$-set is contained in at most one block.
$P(v,k,t)$ stands for the maximum possible $b$.
A packing is called {\it abundant} if $b> v$.
We give new estimates for $P(v,k,t)$ around the critical range,
 slightly improving the Johnson bound and asymptotically determine the minimum $v=v_0(k,t)$ when {\it abundant} packings exist.

For a graph $G$ and a positive integer $c$,
 let $\chi_\ell(G,c)$ be the minimum value of $k$ such that one can
 properly color the vertices of $G$ from any assignment of lists $L(v)$ such that
 $|L(v)|=k$ for all $v\in V(G)$ and
  $|L(u)\cap L(v)|\leq c$ for all $uv\in E(G)$.
 Kratochv\'{\i}l, Tuza and Voigt in 1998 asked to determine $\lim_{n\rightarrow \infty}
 \chi_\ell(K_n,c)/\sqrt{cn}$ (if exists). Using our bound on $v_0(k,t)$, we prove that the limit exists and equals  $1$.
Given $c$, we find the exact value of $\chi_\ell(K_n,c)$ for infinitely many  $n$.
\end{abstract}

\section{Preliminaries on hypergraphs}\label{s1}

A {\em hypergraph} $\cH=(V,\cE)$ consists of a set of vertices $V=V(\cH)$ and a collection $\cE$ of subsets of $V$ called edges or blocks, i.e., multiple copies of edges are allowed.
Often we take $V(\cH)=[v]$, where  $[v]:=\{ 1, 2, 3,\dots, v\}$.
The {\em degree} of a vertex $x\in V$, denoted by $d_\cH(x)$ or just by $d_x$, is the number of edges containing the vertex $x$.

A set of distinct vertices $\{ x_1,, \dots, x_m \}$ is called a {\em system of distinct representatives} (SDR, for short) of the (multi)family $\cE=\{ E_1, \dots, E_m\}$ if $x_i\in E_i$
for each $i\in [m]$. By classic Hall's Theorem~\cite{Hall}, $\cE$ has an SDR if and only if it satisfies Hall's condition
\begin{equation}\label{eq1}
  |\bigcup_{E\in \cE'} E| \geq |\cE'| \enskip \text{ for all } \cE'\subseteq \cE.
  \end{equation}

A hypergraph is $k$-{\em uniform} if all of its edges has $k$ elements.
It is a $t$-{\em packing} if $|E\cap E'|<t$ for any two distinct edges $E,E'\in \cE$.
The following theorem is usually attri\-buted to Johnson~\cite{Johnson},
who used it to get upper bounds for error-correcting codes.
It was rediscovered several times, e.g., Bassalygo~\cite{Ba65}, Corr\'adi~\cite{Co}. 
Let $\cE:=\{ E_1, \dots, E_m \}$ be a family of $k$-sets such that $|E_i\cap E_j|<t$
  for all $1 \leq i <j\leq m$.
Then
\begin{equation}\label{eq2}
  v:=|\bigcup_{i=1}^m E_i|\ge \frac{mk^2}{(m-1)(t-1)+k}.
  \end{equation}

A $(v,k,t)$ packing of size $b$ is a system of $b$ subsets (blocks) of
 a $v$-element underlying set such that each block has $k$-elements
 and every $t$-set is contained in at most one block.
$P(v,k,t)$ stands for the maximum possible $b$.
A packing is called {\it abundant} if $b> v$.
For example, the finite affine plane $AG(2,q)$ of order $q$ is a (perfect) $(q^2, q,2)$ packing with $q^2+q$ blocks, so it is abundant.

Let $v_0(k,t)$ stand for the minimum $v$ that $P(v,k,t)>v$.
For example, we have $v_0(q,2)\leq q^2$ if an $AG(2,q)$ exists.
Applying~\eqref{eq2} to $v+1$ blocks of an abundant packing one gets $v\geq (v+1)k^2 /(v(t-1)+k)$. Rearranging we get $v\left( v(t-1)-k^2+k\right)\geq k^2$ and thus
\begin{equation}\label{eq3}
v_0(k,t)> (k^2-k)/(t-1).
    \end{equation}

\section{Main result and an application}\label{s2}

Our main aim here is to show that~\eqref{eq3} gives the true order of magnitude of $v_0$.

\begin{thm}\label{th2.1}
Let $t\geq 2$ and suppose that $k\to \infty$. Then $v_0(k,t)= (1+o(1))\dfrac{k^2}{t-1}$.
\end{thm}

Leaving out an arbitrary element from each block of a $(v,k,t)$ packing one obtains a $(v,k-1,t)$ packing of the same size (for $k>t\geq 2$). In this way one can see that the sequence $\{ v_0(k,t): k=t, t+1, t+2, \dots \} $ is strictly increasing. So 
 Theorem~\ref{th2.1} follows from~\eqref{eq3} and an explicit construction of an infinite series of abundant packings for a dense sequence of $k$'s giving us an asymptotically matching upper bound.
From now on, it will be more convenient to use $c$ for $t-1$. The following construction is presented in Section~\ref{sec4}.

\begin{cons}\label{cons21}
Let $c\ge 1$ and suppose that $q$ is a prime power, $c<q-1$ and $c$ divides $q-1$.
Then
$$P\left(\frac{q^2-1}{c}+1, q, c+1\right) \geq  \frac{q^2-1}{c} + \left\lfloor \frac{q+1}{c} \right\rfloor.
$$
\end{cons}

We obtain for these values that
\begin{equation}\label{eq4}
 v_0(q,c+1)\leq \frac{q^2-1}{c}+1.
 \end{equation}
It is known (see~\cite{HI}) that for every sufficiently large real $k$
there exists a prime $q\in [k, k+k^{0.6}]$
 such that $c$ divides $q-1$.
Then the monotonicity of $v_0$ and~\eqref{eq4} yield
$$  v_0(k,c+1) \leq v_0(q,c+1)\leq \frac{q^2-1}{c}+1 < \frac{k^2}{c} + O(k^{1.6}).
  $$
This together with~\eqref{eq3}, completes the proof of Theorem~\ref{th2.1}.

Theorem~\ref{th2.1} allows to answer a question on list colorings of graphs.
Recall that a {\em list  $L$ for}  a graph $G$ is an assignment to every $v\in
V(G)$, a set $L(v)$
 of colors that may be used for coloring  $v$. Graph  $G$ is $L$-{\em colorable,}
if there exists a proper coloring $f$ of the vertices of $G$
from $L$, i.e., if $f(v)\in L(v)$ for all $v\in V(G)$ and
$f(u)\neq f(v)$ for all $uv\in E$.
The {\em list chromatic number} of $G$, $\chi_\ell(G)$, is the least $k$ such that $G$ is $L$-colorable,
whenever $|L(v)|=k$ for all $v\in V(G)$.

A list $L$ for a graph $G$ is a {\em $(k,c)$-list} if
$|L(v)|=k$ for all $v\in V(G)$ and $|L(u)\cap L(v)|\leq c$ for all $uv\in E(G)$.
Kratochv\'{\i}l, Tuza and Voigt~\cite{KTV} introduced $\chi_\ell(G,c)$, the least $k$ such that
$G$ is $L$-colorable from each $(k,c)$-list $L$.
They showed that  $\sqrt{cn/2}\le \chi_\ell(K_n,c) \le \sqrt{2ecn}$,
where $K_n$ is the complete graph on $n$ vertices.
They asked whether the limit $\lim_{n\rightarrow \infty}
\chi_\ell(K_n,c)/\sqrt{cn}$ exists.
In Section~\ref{sec5} we use  Theorem~\ref{th2.1} to prove that the limit exists and is  $1$.
We also find the exact value of $\chi_\ell(K_n,c)$ for infinitely many values of $n$.

\section{Explicit gaps}\label{sec3}

There are many results concerning packings when equality holds in~\eqref{eq2}. 
These packings have $1+ (k^2-k)/(t-1)$ blocks and are called {\em symmetric} $(t-1)$-designs, see~\cite{Hall}.
By improving~\eqref{eq3} we establish large explicit gaps between $v_0(k,t)$ and $v_0(k+1,t)$ for many cases. These gaps will be used in our second topic concerning $(k,c)$-list colorings of graphs (see Section~\ref{sec5}).

\begin{claim}\label{cl21}
Let $q>c\geq 1$. Then
 $v_0(q+1,c+1) \geq \dfrac{1}{c}\left( q^2+q+ \dfrac{2(q-c+1)}{c+1}  \right)+1$.
\end{claim}
Note that using~\eqref{eq2}, i.e., the inequality
$v\geq (v+1)(q+1)^2/(vc +q+1)$, leads to the bound $v_0 > (q^2+q)/c + O(1)$.
So we have a slight improvement on the Johnson bound in this critical range.

One can summarize~\eqref{eq4} and Claim~\ref{cl21} in one formula:
\begin{equation}\label{eq5}
v_0(q,c+1)\leq \frac{q^2-1}{c}+1 < \frac{1}{c}\left( q^2+q+ \frac{2(q-c+1)}{c+1} \right)+1 \leq v_0(q+1,c+1)
    \end{equation}
whenever $q$ is a prime power, $1\leq c<q-1$, and $c$ divides $q-1$.

\begin{lem}\label{vertex counting lemma}
Let $c\ge 1$ and suppose that $\cE$ is a $q$-uniform hypergraph on vertex set $Y$ such that
 $|\cE|=q+2$ and $|E\cap E'|\le c-1$ for any two edges.
 Then $|Y|\ge
\frac{1}{c} (q^2+q+\frac{2(q-c+1)}{c+1})$.
\end{lem}

\begin{proof}
Let $d_y$ be the degree of the vertex $y$. We have
\begin{equation}\label{eq6}
\sum_{y\in Y}{d_y\choose 2} = \sum_{E,E'\in \cE} |E\cap E'|\le (c-1){q+2 \choose 2},
\end{equation}
\begin{equation}\label{eq7}
\sum_{y\in Y}{d_y} = \sum_{E\in \cE} |E|= (q+2)q.
\end{equation}

Multiply (\ref{eq6}) by $-2$,  (\ref{eq7}) by $2c$, add them up and rearrange. We get
$$
  c(c+1)|Y| + \sum_{y} -(d_y-c)(d_y-c-1) \geq -(c-1)(q+2)(q+1)+ 2c(q+2)q.
  $$
Rearranging we get the desired lower bound for $|Y|$.
\end{proof}

\begin{proof}[Proof of Claim~\ref{cl21}]
Let $\cP$ be an abundant $(v,q+1,c+1)$ packing on the vertex set $V$. 
Since 
$$
   \sum_{x\in V} d_x =\sum_{P\in \cP} |P| = |\cP|(q+1)> v(q+1), 
   $$  
there exists an $x\in V$ with $d_x> q+1$.
So one can find $q+2$ edges of $\cP$ of the form $\{x\} \cup E_i$ where the family
 $\{ E_1, \dots, E_{q+2}\}$ is a $(v-1, q, c)$ packing on the vertex set $Y=V\setminus \{ x\}$.
One can now apply Lemma~\ref{vertex counting lemma} to complete the proof.
\end{proof}

\section{Construction of a packing}\label{sec4}

In this section we present Construction~\ref{cons21},
a $\left(\frac{q^2-1}{c}+1, q, c+1\right)$ packing $\cP$ of size
$\frac{q^2-1}{c} + \left\lfloor \frac{q+1}{c} \right\rfloor$
whenever $c\ge 1$, $q$ is a prime power, $c<q-1$, and $c$ divides $q-1$.

 Let $\textbf{F}$ be the $q$-element finite field and
let $g$ be an element of order $c$ in the multiplicative group $\textbf{F}\backslash
\{0\}$. Set $H=\{1,g,g^2,...,g^{c-1}\}$, it is a $c$-element subgroup of
$\textbf{F}\backslash \{0\}$. For $(a,b),(a',b')\in
(\textbf{F}\times\textbf{F})$ we say that $(a,b)\sim (a',b')$ if
there exists an $h\in H$ such that $(a',b')=(ha,hb)$.
This is an equivalence relation with $\{ (0,0)\}$ being a 1-element class. Each other
equivalence class is a collection of $c$ elements in ($\textbf{F}\times\textbf{F})\backslash \{(0,0)\}$. So there are $1+ (q^2-1)/c$ equivalence classes.
The equivalence class containing $(a,b)$ is denoted by $\langle a,b \rangle$.
These equivalence classes form the vertex set $V$ of the packing $\cP$.

For $(a,b)\neq (0,0)$, define the set $L\langle a,b \rangle = \{\langle x,y \rangle : ax+by\in
H\}$. Since $H$ is a group, $ax+by\in H$ implies $(h'a)x+(h'b)y\in H$, for all $h'
\in H$. Hence $L\langle a,b \rangle$ is a well-defined subset of $V$.
The next statement is a consequence of basic linear algebra.

\begin{claim}[Furedi~\cite{F}]~\label{Furedi}
Let $(V,\mathcal{L})$ be the hypergraph with vertex set $V=\{\langle a,b \rangle:
a,b\in \textbf{F}\}$ and edge set $\cL=\{L\langle a,b \rangle:
a,b\in \textbf{F}, (a,b)\neq(0,0)\}$. Then\\
$(i)$  $\cL$ is a $q$-uniform hypergraph, $|L\langle a,b \rangle|=q$,\\
$(ii)$ $V$ has $1+(q^2-1)/c$ vertices, \\
$(iii)$ $\cL$ has $(q^2-1)/c$ edges.\\
$(iv)$ Suppose that $(a,b)\nsim (a',b')$. Then $|L\langle a,b \rangle \cap L\langle
a',b' \rangle|= c$ whenever {\rm det}$\left( \begin{array}{cc}
a & b \\
a' & b' \\ \end{array} \right)\neq 0$ and
 $|L\langle a,b \rangle \cap L\langle a',b' \rangle|=0$ whenever this determinant is $0$.
\end{claim}

Define the sets $V_m:=\{\langle x,y \rangle: y=mx, (x,y)\neq (0,0)\}$ for $m\in \textbf{F}$ and let
  $V_\infty:= \{\langle x,y \rangle: x=0, (x,y)\neq (0,0)\}$.
Then $|V_\alpha|=(q-1)/c$ and these sets form a partition of $V\setminus \langle 0,0\rangle$.
Moreover, $|V_\alpha\cap L\langle a,b \rangle|\leq 1$ for each $\langle
a,b \rangle \in V$.

Select $\lfloor (q+1)/c \rfloor$ disjoint $c$-sets $C_1, C_2, \dots$ from $\textbf{F} \cup\{ \infty\}$
 and define $L(i):= \cup \{ V_\alpha: \alpha \in C_i\}\cup \{ \langle 0,0 \rangle\} $.
Then these are $q$-element sets pairwise meeting in  $\langle 0,0 \rangle$.
Moreover, $|L(i)\cap L\langle a,b \rangle|\leq c$. Finally,
  $\cP:=\cL \cup \{ L(1), L(2), \dots \}$ is a packing we were looking for.

\section{List colorings}\label{sec5}

In this section we answer the question of Kratochv\'{\i}l, Tuza and Voigt~\cite{KTV} on colorings of complete graphs from $(k,c)$-lists.

\begin{thm}\label{main theorem}
Let $c\ge 1$, then\\
$(i)$ $\lim_{n\rightarrow \infty} \chi_\ell(K_n,c)/\sqrt{cn} =1$. \\
$(ii)$ If $q$ is a prime power, $c<q-1$ and $c$ divides $q-1$, then
$\chi_\ell(K_n,c)=q+1$ for all
$$n\in
\left[ \frac{q^2-1}{c}+2, \frac{1}{c}\left( q^2+q+ \frac{2(q-c+1)}{c+1} \right)+1\right].$$
\end{thm}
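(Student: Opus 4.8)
The plan is to prove a lower bound on $\chi_l(K_n,c)$ that matches the upper bound of Proposition~\ref{upperbound} asymptotically (giving $(i)$) and exactly when $q$ is a prime power with the divisibility condition (giving $(ii)$). The key is to exhibit, for $k$ not too large, a $(k,c)$-list $L$ on $K_n$ from which $K_n$ is not $L$-colorable; by Observation~\ref{Vizing} this means the vertex--color adjacency graph $F(V(K_n),U_L)$ has \emph{no} matching saturating $V(K_n)$, which by Hall's theorem means some set $S\subseteq V(K_n)$ has $|N(S)|<|S|$. The natural construction is a \emph{packing}: take a ground set $U$ of colors and let the lists $L(v)$ be $k$-subsets of $U$ with pairwise intersections at most $c$, using as few colors as possible for as many vertices as possible. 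If $|U|<n$, then trivially $S=V$ witnesses Hall's failure, so the real task is to pack $n$ blocks of size $k$, pairwise meeting in $\le c$ points, on a ground set of size $<n$; more precisely we want many blocks on few points, i.e.\ a packing that is ``abundant'' (more blocks than points) — this is exactly the ``minimal abundant packings'' of the title.

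First I would set up the counting that limits how small $|U|$ can be: by double counting pairs (points vs.\ pairs of blocks through a point), as in inequalities~(\ref{eq1})--(\ref{eq2}) of Lemma~\ref{vertex counting lemma} but now with $\le c$ instead of $\le c-1$ and with $n$ blocks of size $k$, one gets a lower bound on $|U|$ of the form $|U|\ge \frac{k^2}{c}(1-o(1))$ when $n$ is large; turning this around, if $n$ blocks of size $k$ with pairwise intersection $\le c$ fit on $<n$ points then necessarily $k\le \sqrt{cn}(1+o(1))$, which gives $\chi_l(K_n,c)\ge \sqrt{cn}(1-o(1))$ once we actually construct such a packing for every $k$ up to that bound. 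So the two halves of $(i)$ are: the upper bound $\chi_l(K_n,c)\le \sqrt{cn}(1+o(1))$ is immediate from Proposition~\ref{upperbound} (solve $n\le \frac{q^2+\Theta(q)}{c}$ for $q$), and the lower bound needs a construction.

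For the construction I would use an affine or projective plane of order $q$, or more precisely a \emph{transversal design} / \emph{resolvable design} coming from a prime power $q$: when $q$ is a prime power, $\mathrm{AG}(2,q)$ gives $q^2$ points and $q^2+q$ lines, any two lines meeting in $\le 1$ point; to get intersection bound exactly $c$ where $c\mid q-1$, group the parallel classes or blow up/merge blocks so that $c$ ``old'' intersections accumulate, producing blocks of size roughly $q$ on roughly $q^2/c$ points with pairwise intersection $\le c$ and with the number of blocks exceeding the number of points — this is where the hypotheses ``$q$ a prime power, $c\mid q-1$, $c<q-1$'' are used, and the exact interval $[\frac{q^2-1}{c}+2,\,\frac{q^2+\frac{c+3}{c+1}q-\frac{2(c-1)}{c+1}}{c}]$ in $(ii)$ should fall out: the left endpoint $\frac{q^2-1}{c}+2$ is one more than the number of points $\frac{q^2-1}{c}$ plus the slack $+2$ matching the ``$q+2$ blocks'' in Lemma~\ref{vertex counting lemma}, and the right endpoint is exactly the threshold in Proposition~\ref{upperbound} with $k=q+1$, so the two bounds pinch. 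For general (non prime power) large $n$ one fills the gaps between prime powers using monotonicity of $\chi_l(K_n,c)$ in $n$ and the fact that prime powers (or even primes) are dense enough that the $\sqrt{cn}$ scaling is unaffected, i.e.\ $\chi_l(K_n,c)/\sqrt{cn}\to 1$ along all of $n$.

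The main obstacle I expect is the construction in part $(ii)$: one needs a near-optimal packing achieving equality (or within an additive constant) in the counting bound, with the number of blocks strictly larger than the number of points, precisely on the stated range of $n$. Simple resolvable-design or affine-plane constructions give intersection bound $1$; getting it up to a prescribed $c\mid q-1$ while keeping the block size at $q$ (so that $k=q+1$ after we add a private color per vertex to handle the case analysis, cf.\ the $L_i'=L(v_i)\setminus\{\alpha\}$ trick in Lemma~\ref{existence of matching}) and keeping the point count at $\frac{q^2-1}{c}$ requires a careful merging of parallel classes of $\mathrm{AG}(2,q)$ into groups of size $c$, which only works cleanly when $c$ divides $q-1$ — verifying that the merged structure still has all pairwise intersections $\le c$ and that no Hall-saturating matching exists (equivalently, that the $n$ lists genuinely cannot be system-of-distinct-representatives colored) is the delicate combinatorial heart of the argument. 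The asymptotic lower bound for $(i)$ is comparatively soft once any family of abundant packings with $k\sim\sqrt{cn}$ is in hand.
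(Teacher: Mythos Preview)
Your outline matches the paper's argument: upper bound from Proposition~\ref{upperbound}, lower bound from an abundant $q$-uniform packing with pairwise intersections $\le c$ built algebraically from $\mathrm{GF}(q)$ by quotienting $\mathbf{F}^2\setminus\{(0,0)\}$ by a $c$-element multiplicative subgroup (your ``merging of parallel classes'', made precise in the paper as Theorem~\ref{Furedi} together with Claim~2), then monotonicity in $n$ plus prime density for general $n$. Two small corrections are worth making. First, you do \emph{not} add a private color per vertex: the packing directly yields a $(q,c)$-list on $\frac{q^2-1}{c}+1$ total colors for $n=\frac{q^2-1}{c}+2$ vertices, so no system of distinct representatives exists and hence $\chi_l(K_n,c)>q$; the $L_i'=L(v_i)\setminus\{\alpha\}$ device lives entirely on the upper-bound side and plays no role in the construction (so your reading of the ``$+2$'' via the $q+2$ blocks of Lemma~\ref{vertex counting lemma} is also off --- it is simply $|V'|+1$). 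Second, for part $(i)$ you need primes $q\equiv 1\pmod c$ in short intervals, not merely primes or prime powers; the paper invokes the Huxley--Iwaniec theorem (Lemma~\ref{HI}) for exactly this, and without the congruence condition the construction is unavailable.
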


\begin{proof}
The complete graph $K_n$ is $L$-colorable if and only if the set of lists $\{ L(v): v\in [n]\}$
 satisfy Hall's condition~\eqref{eq1}. (This observation is due to Vizing~\cite{V}.)
A $(k,c)$-list corresponds to a $(c+1)$-packing of $k$-sets.
So $\chi_\ell(K_n,c)>k$ if and only if there is an abundant $(v,k,c+1)$ packing with $v\leq n$. Hence
\begin{equation}\label{eq8}
   \chi_\ell(K_n,c)= q+1 \enskip \Longleftrightarrow \enskip v_0(q, c+1)< n \leq v_0(q+1,c+1).
  \end{equation}

To make~\eqref{eq8} more clear, let us explain. 
If $v_0(q, c+1)< n$, then there exists a $(v,q,c+1)$ packing $\cP$
 of size $v+1\leq n$.
Assign the members of $\cP$ to the first $v+1$ vertices of $K_n$ and assign completely disjoint $q$-sets to the rest of the vertices. This assignment does not satisfy Hall's condition, so we obtain $\chi_\ell(K_n,c) >q$.
On the other hand, if $n \leq v_0(q+1,c+1)$ then any  $(q+1, c)$-list assignment $L$ of $K_n$ is a
 $(c+1)$-packing of $(q+1)$-sets of size at most $v_0(q+1,c+1)$.  So neither $\{ L(v): v\in [n]\}$ is abundant, nor any part of it is abundant. Therefore, it satisfies Hall's condition and thus implying $K_n$ is $L$-colorable.

Finally, Theorem~\ref{main theorem} is now a corollary of  Theorem~\ref{th2.1}, \eqref{eq5}, and~\eqref{eq8}.
\end{proof}

For a fixed $c\ge 1$, one might be interested in knowing what is the
maximum value of $\chi_\ell(G,c)$ over all $n$-vertex graphs $G$. Note that if $H$ is
an induced subgraph of $G$, then $\chi_\ell(H,c)\le \chi_\ell(G,c)$, but this may not
hold true for non-induced subgraphs. We have the following conjecture.

\begin{conj}
If $c,n\ge 1$ and $G$ is an $n$-vertex graph, then $\chi_\ell(G,c)\le \chi_\ell(K_n,c)$.
\end{conj}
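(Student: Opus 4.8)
Since this is only a conjecture we describe the lines of attack we consider most promising rather than a finished argument. The central difficulty is the one flagged in the remark just above: $\chi_l(\cdot,c)$ is \emph{not} monotone under adding edges, because inserting an edge $uv$ simultaneously forbids list assignments with $|L(u)\cap L(v)|>c$ (which helps) and imposes the constraint $f(u)\ne f(v)$ (which hurts). Hence one cannot simply restrict a bad $(k,c)$-list for $K_n$ to $G$, nor lift a bad one from $G$ to $K_n$ verbatim; any proof must genuinely exploit the structure of the extremal list assignments.

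The plan is to argue by contradiction on an extremal counterexample. Fix $c$, set $k=\chi_l(K_n,c)$, and suppose some $n$-vertex graph $G$ admits a $(k,c)$-list $L$ with no proper $L$-coloring; among all such pairs $(G,L)$ choose one maximizing $|E(G)|$. If $G=K_n$ we contradict the definition of $k$, so $G$ has a non-edge $uv$, and by maximality $G+uv$ \emph{is} $L$-colorable, i.e.\ every $L$-coloring of $G$ fails ``only because'' $f(u)=f(v)$. The first thing I would try is to massage $L$ on such a pair: replace the colors of $L(u)\cap L(v)$ that are used on $u$ by brand-new colors. This strictly decreases $|L(u)\cap L(v)|$, leaves every other pairwise intersection unchanged (so we do not violate the $(k,c)$-condition on any existing edge), and only \emph{removes} coloring options. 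Iterating over all non-edges would produce a genuine $(k,c)$-list $L'$ for $K_n$; the crux is to choose the replacements so that non-$L$-colorability of $G$ is preserved at each step, which would contradict $k=\chi_l(K_n,c)$ via Observation~\ref{Vizing} and Lemma~\ref{existence of matching}.

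An alternative route is induction on $n$. Let $\delta(n):=\chi_l(K_n,c)-\chi_l(K_{n-1},c)$; using Proposition~\ref{upperbound} together with the lower-bound constructions of Section~2 one can show $\delta(n)\ge 1$ infinitely often and, more precisely, that the two sides of the main estimate in Theorem~\ref{main theorem} grow together. On the other side, if an $n$-vertex graph $G$ has a vertex $v$ of small degree one bounds $\chi_l(G,c)$ by $\chi_l(G-v,c)+(\text{a cheap increment})$ through a greedy/first-fit extension of a coloring of the induced subgraph $G-v$, and if instead $G$ has a large independent set one deletes a vertex $v$ of it and uses that the non-neighbours of $v$ may reuse $f(v)$. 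Combining either reduction with the inductive hypothesis $\chi_l(G-v,c)\le\chi_l(K_{n-1},c)$ would close the argument, provided the single-vertex increment on the $G$-side never exceeds $\delta(n)$.

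I expect the genuine obstacle to be precisely the step of modifying the list on a non-edge (equivalently, in the inductive version, controlling the increment when $G$ has large minimum degree and large clique number): collapsing or recoloring a big shared color class can inadvertently create a valid coloring of $G$, and there is no evident monovariant forcing non-colorability to persist. A sensible intermediate target, which I would attack first, is the case $c=1$, and the case of graphs $G$ with $\omega(G)\ge n-t$ for small $t$: there $L$ is already close to a $(k,c)$-list for $K_n$, only $O(t^2)$ non-edges need repair, and one can hope to track the colorability obstruction explicitly. Success in that regime would indicate whether the extremal-counterexample scheme can be pushed to arbitrary $G$.
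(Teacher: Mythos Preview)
The paper does not prove this statement; it is explicitly stated as an open \emph{Conjecture} at the very end of the paper, immediately before the bibliography, with no accompanying argument. There is therefore no ``paper's own proof'' to compare your proposal against. You correctly identify the statement's status and offer only heuristic lines of attack rather than a finished proof, which is appropriate.

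One correction to your extremal-counterexample sketch is worth recording. From maximality of $|E(G)|$ you cannot conclude that $G+uv$ is $L$-colorable: any proper $L$-coloring of $G+uv$ is automatically a proper $L$-coloring of $G$ (removing the edge $uv$ only removes a constraint), so if $G+uv$ were $L$-colorable then $G$ would be too, contradicting your choice of $(G,L)$. What maximality actually yields is that $L$ is \emph{not} a valid $(k,c)$-list for $G+uv$, i.e., $|L(u)\cap L(v)|>c$ for every non-edge $uv$ of $G$. This is in fact a cleaner structural conclusion and is the natural starting point for the list-modification idea you describe next; the difficulty you flag---that trimming $L(u)\cap L(v)$ may accidentally create a valid coloring---remains the genuine obstacle.
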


Work~\cite{KTV} generated lots of further research, especially concerning planar graphs, 
e.g.,~\cite{YWW}. 
For further recent results concerning separated list colorings see~\cite{DEKO,EKT}.

\end{document}